\documentclass[12pt]{article}
\usepackage[amsmath,cleveref]{e-jc}
\usepackage{graphicx}
\usepackage{tikz}
\MSC{05C35, 05C65}
\title{An Upper Bound on the Linear Tur\'{a}n Number of
$k$-Crowns}
\author{Rajat Adak}

\authortext{}{Department of Computer Science and Automation, Indian Institute of Science, Bengaluru, India (\email{rajatadak@iisc.ac.in}).}
\date{}

\begin{document}

\maketitle
\begin{abstract}
    A hypergraph $H$ is said to be \emph{linear} if every pair of vertices lies in at most one hyperedge.
Given a family $\mathcal{F}$ of $r$-uniform hypergraphs (also called $r$-graphs), an $r$-graph $H$ is said to be \emph{$\mathcal{F}$-free} if it contains no member of $\mathcal{F}$ as a subhypergraph.
The \emph{linear Tur\'{a}n number} $ex_r^{\mathrm{lin}}(n,\mathcal{F})$ denotes the maximum number of edges in an $\mathcal{F}$-free linear $r$-graph on $n$ vertices.

The crown is a linear $3$-graph obtained from three pairwise disjoint edges by adding an edge that intersects each of them in a distinct vertex. Recently, Gy\'arf\'as, Ruszink\'o, and S\'ark\"ozy~[\emph{Linear Tur\'an numbers of acyclic triple systems}, European J.\ Combin.\ (2022)] initiated the study of bounds on the linear Tur\'an number for acyclic $3$-uniform linear hypergraphs, including that of the crown.

We extend the notion of a crown by defining a $k$-crown, denoted by $C_{1,k}^r$, to be a linear $r$-graph consisting of one base edge together with $k$ pairwise disjoint edges, each intersecting the base in a distinct vertex. In this paper, we establish an upper bound on $ex_r^{\mathrm{lin}}(n,C_{1,k}^r)$, which in particular improves the recent bound of Zhang, Broersma, and Wang~[\emph{Generalized Crowns in Linear $r$-Graphs}, Electron.\ J.\ Combin.\ (2025)] for all $r \geq 4$, without forbidding any auxiliary configuration. We also note that the cases $k\in\{1,2\}$ correspond to the short linear paths $P_2^r$ and $P_3^r$, and can be treated separately.
\end{abstract}
\section{Introduction}
Extremal problems in hypergraphs form a central part of combinatorics, with Tur\'an-type questions asking for the maximum number of edges in a hypergraph avoiding a given forbidden configuration. In the setting of linear hypergraphs, these problems often exhibit behavior quite different from that of general hypergraphs, since the linearity condition imposes strong structural restrictions. Recall that a hypergraph $H$ is called \emph{linear} if every pair of vertices is contained in at most one edge. For a family $\mathcal{F}$ of $r$-uniform hypergraphs, the \emph{linear Tur\'an number} $ex_r^{\mathrm{lin}}(n,\mathcal{F})$ denotes the maximum number of edges in an $\mathcal{F}$-free linear $r$-graph on $n$ vertices.

The study of linear Tur\'an numbers for acyclic hypergraphs has recently attracted considerable attention. In particular, Gy\'arf\'as, Ruszink\'o, and S\'ark\"ozy~\cite{gyarfas2022linear} initiated a systematic study of linear Tur\'an numbers of acyclic $3$-uniform hypergraphs. Among the configurations considered in their work is the \emph{crown}, a linear $3$-graph consisting of three pairwise disjoint edges together with a fourth edge, called the \textit{base}, intersecting each of them in exactly one vertex. This configuration is a natural and fundamental example of an acyclic linear hypergraph, and its extremal behavior has motivated further developments in the area.

In \cite{fletcher2021improved}, the author improved the bound for crowns, which was later further improved in \cite{tang2022turan}. More recently, Zhang, Broersma, and Wang~\cite{zhang2025generalized} introduced generalized crowns in linear $r$-graphs and obtained upper bounds on their linear Tur\'an numbers. Their work extends the investigation of crown-type configurations beyond the $3$-uniform setting. Motivated by these developments, it is natural to study broader crown-like structures in linear $r$-graphs and to seek sharper bounds for their linear Tur\'an numbers.

In this paper, we extend the notion of a crown to linear $r$-graphs by defining a $k$-crown, denoted by $C_{1,k}^r$, to be a linear $r$-graph consisting of one base edge together with $k$ pairwise disjoint edges, each intersecting the base in a distinct vertex. In the crown setting considered here, we assume $3\leq k \leq r$. This notion provides a natural generalization of the usual crown and allows one to treat a wider class of crown-type configurations in a unified way. The degenerate cases $k\in\{1,2\}$ correspond to the short linear paths $P_2^r$ and $P_3^r$, respectively; we treat these separately.

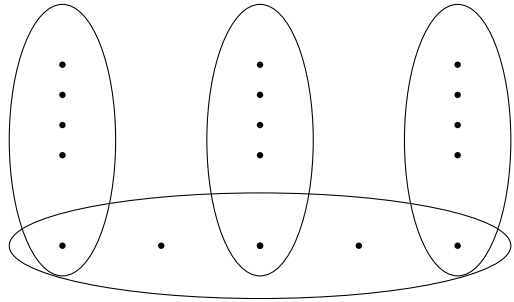
\begin{figure}[ht]
    \centering
    \begin{tikzpicture}[scale=1,
    every node/.style={circle,fill=black,inner sep=1.5pt}
]

\begin{scope}[xshift=7.5cm]

% Ellipses
\draw (0,0) ellipse (0.7 and 1.8);
\draw (2.6,0) ellipse (0.7 and 1.8);
\draw (5.2,0) ellipse (0.7 and 1.8);
\draw (2.6,-1.4) ellipse (3.3 and 0.7);

% Dots in vertical ellipses
\foreach \x in {0,2.6,5.2} {
  \foreach \y in {1,0.6,0.2,-0.2,-1.4}
    \fill (\x,\y) circle (1.2pt);
}

% Dots in bottom horizontal ellipse
\foreach \x in {1.3,2.6,3.9}
  \fill (\x,-1.4) circle (1.2pt);

\end{scope}
\end{tikzpicture}
\caption{Configuration of $C_{1,3}^5$ ($k =3$, $r =5$)}
\end{figure}

Our main result gives an upper bound on $ex_r^{\mathrm{lin}}(n,C_{1,k}^r)$. In particular, when $k=r$, it improves the recent upper bound of Zhang, Broersma, and Wang~\cite{zhang2025generalized} for all $r \geq 4$. Moreover, our result requires only the exclusion of $C_{1,r}^r$, whereas the approach in~\cite{zhang2025generalized} additionally forbids other auxiliary configurations. Thus, our result strengthens the currently known bounds for generalized crowns in linear $r$-graphs.

In \cite{gyarfas2022linear}, Gy\'arf\'as, Ruszink\'o, and S\'ark\"ozy initiated the study of Linear Tur\'{a}n number for crowns with the following result.
\begin{theorem}[\cite{gyarfas2022linear}, Theorem 1.7]\label{thm1} $$6\left\lfloor\frac{n-3}{4}\right\rfloor + \epsilon \leq ex_3^{\mathrm{lin}}(n,C^3_{1,3}) \leq 2n,$$ where $\epsilon =0$, if $n-3 \equiv 0,1$ $(\text{mod } 4)$, $\epsilon =1$ if $n-3 \equiv 2$ $(\text{mod } 4)$, and $\epsilon =3$ if $n-3 \equiv 3$ $(\text{mod } 4)$.
    
\end{theorem}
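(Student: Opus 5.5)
We need two things: a lower-bound construction and an upper bound of $2n$.

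\textbf{Upper bound.} Let $H$ be a linear $3$-graph on $n$ vertices with no $C^3_{1,3}$. For an edge $e=\{a,b,c\}$, consider the edges meeting $e$, split by the vertex of $e$ they contain. If at each of $a$, $b$, $c$ we could choose one edge other than $e$ so that the three chosen edges are pairwise disjoint, we would have a crown. So for every edge, this disjoint choice is impossible.

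The plan is a discharging or averaging argument showing that this forces many low-degree vertices. Suppose $d(a),d(b),d(c)\ge 4$. Then $a$ has at least $3$ further edges, covering $6$ vertices outside $e$, and likewise for $b$ and $c$. By linearity, a fixed edge through $a$ meets at most $2$ edges through $b$, since each of its two outer vertices lies in at most one edge with $b$. A greedy choice then succeeds. Pick $f_a$ through $a$. At most $2$ of the $\ge 3$ further edges at $b$ meet $f_a$, so pick $f_b$ disjoint from $f_a$. At most $2+2=4$ of the edges at $c$ meet $f_a\cup f_b$, so this step needs $d(c)-1\ge 5$.

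A more careful count should show that every edge contains a vertex of degree at most $3$. It may even give the stronger statement that the degree sum over some two vertices of each edge is bounded. This is the main obstacle: the degree thresholds must be tuned so the final count gives exactly $2n$. The fallback is the simple version. Every edge contains a vertex of degree at most $d_0$, so assigning each edge to such a vertex gives $|E|\le d_0 n$. That yields $2n$ only if $d_0=2$, which is too strong to hold edge by edge.

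So the real argument should use weights. Each vertex $v$ distributes charge $1/d(v)$ to each of its edges, and we then show every edge receives total charge at least $3/2\cdot\frac{1}{?}$. More precisely, the goal is to show $\sum_{e}\sum_{v\in e}1/d(v)=n'\le n$ (where $n'$ counts non-isolated vertices), together with an edge-wise lower bound $\sum_{v\in e}1/d(v)\ge 1/2$, which gives $|E|\le 2n$. The lower bound $1/2$ should follow from case analysis on the degrees $(d(a),d(b),d(c))$, using the greedy-disjointness argument above to exclude the high-degree triples, for example $(4,5,6)$ or larger. The tight cases, such as a vertex of degree $2$ paired with large degrees, need the refined count. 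Here one should exploit that an edge through a high-degree vertex meets few edges at the other vertices.

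\textbf{Lower bound.} Take $\lfloor (n-3)/4\rfloor$ disjoint copies of a suitable $4$-vertex-plus-shared structure, each contributing $6$ edges. A natural choice is a fixed set of $3$ vertices $\{x,y,z\}$ together with $4$-sets $Q_i$ that are disjoint from each other and from $\{x,y,z\}$. Add $6$ edges formed by matching pairs of $Q_i$ with $x$, $y$ or $z$: the three perfect matchings of $K_4$ on $Q_i$ are each assigned to one of $x$, $y$, $z$. This is linear. Any edge contains a vertex of $Q_i$, and every edge meeting it lies within $Q_i\cup\{x,y,z\}$, which is too small to host $3$ disjoint legs. This rules out crowns. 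The leftover $n-3-4\lfloor (n-3)/4\rfloor$ vertices give the extra $\epsilon$ edges by small ad hoc additions.
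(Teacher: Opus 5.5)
\textbf{Upper bound.} Your charging scheme is the right one. Each vertex sends $1/d(v)$ to each of its edges, so the total charge is $n'\le n$. This is exactly the degree-summation argument of the paper's main theorem, which at $k=r=3$ gives $|E(H)|\le 5(n-s)/3\le 2n$; the paper only cites this statement and never proves the lower bound. However, you never establish the edge-wise inequality. It is left as ``should follow from case analysis'', surrounded by claims that are unproved or misdirected. The missing step is short. Order $e=\{u_1,u_2,u_3\}$ so that $d(u_1)\ge d(u_2)\ge d(u_3)$, and run your greedy in the order $u_3,u_2,u_1$. It produces a crown as soon as $d(u_3)\ge 2$, $d(u_2)\ge 4$ and $d(u_1)\ge 6$. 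Hence in a crown-free graph one of three cases holds: $d(u_3)=1$, or $d(u_2),d(u_3)\le 3$, or all three degrees are at most $5$. Correspondingly $\sum_{v\in e}1/d(v)\ge\min\{1,2/3,3/5\}=3/5$, which gives even $|E|\le 5n/3$. This does not give ``every edge has a vertex of degree at most $3$'' (your greedy does not exclude degrees $(5,5,5)$), and you do not need it. An edge with a degree-$2$ vertex is never a tight case for your target $1/2$, since that vertex alone contributes $1/2$; degrees $(\ge 6,\ge 4,2)$ are in fact excluded outright. No ``refined count'' is needed. Also remove the unfinished expression containing a question mark.

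\textbf{Lower bound.} Your construction is correct: $Q_i\cup\{x,y,z\}$ with its six edges is the Fano plane minus the line $\{x,y,z\}$, with copies glued along that missing line. Your crown-freeness argument, however, is wrong. Take a base $e=\{w,p,p'\}$ with $w\in\{x,y,z\}$ and $p,p'\in Q_i$. Only the legs at $p$ and $p'$ are confined to $Q_i\cup\{x,y,z\}$; the leg at $w$ can be any edge of $w$ inside another block $Q_j$. Two disjoint triples fit easily in a $7$-set, so ``too small'' proves nothing. Use the matching structure instead. Write $Q_i=\{p,p',q,q'\}$, so $M_w=\{pp',qq'\}$. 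A leg at $p$ avoids $w$ and $p'$, so it is $\{w_1,p,\tilde q\}$ with $\tilde q\in\{q,q'\}$. A disjoint leg at $p'$ must then be $\{w_2,p',\hat q\}$, where $\hat q$ is the other element of $\{q,q'\}$. Now $\{p\tilde q,p'\hat q\}$ is one of the two perfect matchings of $Q_i$ other than $M_w$, and both its pairs received the same apex. So $w_1=w_2$ and the two legs meet. Finally, make the $\epsilon$ term explicit rather than leaving it to ``ad hoc additions''. With leftover vertices $u,v$, add $\{x,u,v\}$, giving $\epsilon=1$. With leftover vertices $u,v,t$, add $\{x,u,v\}$, $\{y,v,t\}$, $\{z,t,u\}$, giving $\epsilon=3$. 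Both additions keep the hypergraph linear and create no crown. In the first case $u$ and $v$ have degree $1$. In the second, for each new base the other edges at its two non-apex vertices share the third leftover vertex. The argument for bases inside some $Q_i$ is unchanged, since no new edge touches $Q_i$.
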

Fletcher~\cite{fletcher2021improved} improved the upper bound of \Cref{thm1} to $\frac{5n}{3}$. Later, Carbonero, Fletcher, Guo, Gy\'arf\'as, Wang, and Yan in \cite{carbonero2022crowns} proved that every linear $3$-graph with minimum degree $4$ contains a crown. They also conjectured that $ex(n,C^3_{1,3}) \sim \frac{3n}{2}$, in \cite{carbonero2021crowns}.

Later, Tang, Wu, Zhang, and Zheng proved this conjecture in~\cite{tang2022turan}. They gave the following result.
\begin{theorem}[\cite{tang2022turan}, Theorem 1]\label{thm2}
$$ex_3^{\mathrm{lin}}(n,C^3_{1,3}) \leq \frac{3(n-s)}{2},$$ where $s$ is the number of vertices of degree at least $6$.
\end{theorem}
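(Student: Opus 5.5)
The plan is a local-structure-plus-discharging argument. Let $H$ be a crown-free linear $3$-graph on $n$ vertices, and write $d(v)$ for degrees. The first step is a local lemma saying which degree triples an edge can have. For an edge $e=\{a,b,c\}$, the edges through $a$ other than $e$ pairwise meet only in $a$. By linearity, an edge through $c$ (other than $e$) meets at most two edges through $a$ and at most two edges through $b$ outside $e$. Choosing greedily an edge at $c$, then one at $b$, then one at $a$ gives a crown with base $e$ whenever, after sorting $d_1\le d_2\le d_3$ over the vertices of $e$, we have $d_1\ge 2$, $d_2\ge 4$ and $d_3\ge 6$. So in a crown-free $H$ every edge satisfies $d_1=1$, or $d_2\le 3$, or $d_3\le 5$. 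I would sharpen this with Hall's theorem applied to the three families of pendant edges, treating the two extra vertices of each candidate edge as the obstructions. This should settle most triples with all degrees in $\{4,5\}$, leaving only rigid configurations in which the relevant edges are forced to pairwise intersect in a prescribed way.

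The second step is a discharging scheme realizing $|E(H)|\le \frac32(n-s)$. Every vertex of degree at most $5$ starts with charge $3/2$, and vertices of degree $\ge 6$ get nothing. A vertex $v$ with $d(v)\le 5$ sends $w(d(v))=\frac{3}{2d(v)}$ to each incident edge. This gives $w(1)=3/2$, $w(2)=3/4$, $w(3)=1/2$, $w(4)=3/8$ and $w(5)=3/10$, so no vertex sends more than its $3/2$. It then suffices to check that every edge receives total charge at least $1$. If $d_1=1$ the edge receives $\ge 3/2$. If $d_2\le 3$ it receives $\ge w(d_1)+w(d_2)\ge 1$. If all three degrees are $4$ it receives $9/8$. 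These cases cover every edge incident to a vertex of degree $\ge 6$.

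The remaining bad edges have all degrees in $\{4,5\}$ with at least one $5$, and receive only $0.975$ or $0.9$. These are the main obstacle. I would first try to show, via the Hall-refined local lemma, that such an edge is either the base of a crown or lies in a tightly structured cluster. In such a cluster some neighbouring edges have a vertex of degree at most $3$ or degree-$1$ vertices, or the whole cluster is a small closed configuration such as a subgraph of a Fano-type plane, in which all vertices have degree at most $5$. Neighbouring edges that received a surplus (for instance $3/2$, or $5/4$ from a $(2,2,\cdot)$ pattern) then pass the missing $\le 1/10$ to the deficient edge. A closed configuration instead forms its own component, whose edge count can be checked directly against $\frac32$ times its number of vertices. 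Getting these secondary transfers to be well defined is where I expect the real work to lie. In particular, no surplus edge may be asked to donate to too many deficient edges.
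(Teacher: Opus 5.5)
The paper does not prove this statement. It quotes it from Tang, Wu, Zhang and Zheng. Its own argument (Lemmas~\ref{lem1}--\ref{lem2} plus the double count of $\sum_{e}\sum_{v\in e}I(v)/d(v)$) gives only $\frac53(n-s)$ when $r=k=3$, as the paper itself notes. Your first two steps are exactly that argument with all weights multiplied by $\frac32$. Your greedy lemma is Lemma~\ref{lem1} with $r=k=3$, and the weight $\frac{3}{2d(v)}$ is $\frac32\,I(v)/d(v)$. These steps are correct, with one correction: the deficient edges are precisely those of degree type $(4,5,5)$ and $(5,5,5)$, since type $(4,4,5)$ already receives $\frac{21}{20}$. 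On their own, though, they guarantee each edge only $\frac{9}{10}$, so they prove $|E(H)|\le\frac53(n-s)$ and nothing more.

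\textbf{The gap.} The whole improvement from $\frac53$ to $\frac32$ rests on the $(4,5,5)$ and $(5,5,5)$ edges, and for these you only describe what you would try. No structural lemma is proved, no transfer rule is specified, and nothing bounds how many deficient edges charge one donor.

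\textbf{The proposed tool does not fit.} A crown on base $e=\{a,b,c\}$ needs a pairwise disjoint triple $(e_a,e_b,e_c)$ from the three pendant families. Each edge meets at most two edges of each other family, so this is an independent-transversal condition, not a system-of-distinct-representatives condition, and Hall's theorem does not apply.

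\textbf{What works is a direct rigidity argument.} Take a $(5,5,5)$ edge. Any $e_c$ meets at most two of the four $a$-edges and at most two of the four $b$-edges. The at least two survivors on each side must pairwise intersect, and a $b$-edge meets at most two $a$-edges. This forces exactly two survivors on each side, each meeting both on the other side: the pattern of the lines through $a$ and $b$ in a Fano plane containing $e$. An $a$-edge meets at most two $c$-edges, so the four $c$-edges cannot all leave the same such quadrilateral. One deduces that $e$ with its neighbourhood is two Fano planes sharing the line $e$. For a $(4,5,5)$ edge, starting from the three edges at the degree-$4$ vertex gives the same configuration minus one line through that vertex.

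\textbf{Closure.} One must then show nothing else attaches. In the $(5,5,5)$ case, $a,b,c$ are saturated. Let $f$ be an extra edge through an outer point $x$ of one plane, with $\{a,x,y\}$ a line there. By linearity $f$ contains at most one outer point of the other plane. Then $f$, a $y$-line of the same plane other than $\{a,x,y\}$, and whichever $a$-line of the other plane misses $f$ give a crown on base $\{a,x,y\}$. The $(4,5,5)$ case needs a few more bases.

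With this, every deficient edge lies in a component with $11$ vertices, all of degree at most $5$, and at most $13<\frac{33}{2}$ edges. Your discharging is then needed only on the other components. Without this rigidity-and-closure lemma, or a fully specified and verified secondary discharging, your argument does not reach $\frac32(n-s)$. Note also that the relevant local structure is two Fano planes glued along a line. It is not a subgraph of a single Fano plane, whose edges are all of type $(3,3,3)$.
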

\Cref{thm2} was later extended to $r$-graphs by Zhang, Broersma, and Wang~\cite{zhang2025generalized}. For this, they introduced another generalization of the crown. They defined $C_{1,r}^*$ to be the linear $r$-graph on $r^2-r+3$ vertices and $r+1$ edges consisting of $r-2$ edges $e_1,e_2,\dots,e_{r-2}$ that intersect in exactly one common vertex $v$, two additional disjoint edges $e_{r-1}$ and $e_r$ that are also disjoint from $\{e_1,e_2,\dots,e_{r-2}\}$, and one further edge $e$ intersecting each edge of $\{e_1,e_2,\dots,e_r\}$ in exactly one vertex other than $v$. Instead of forbidding just $C^r_{1,r}$, they gave bounds for $\{C^r_{1,r},C^*_{1,r}\}$-free linear $r$-graphs. They proved that,
\begin{theorem}[\cite{zhang2025generalized}, Theorem 3]\label{thm3}
$$r(r-1)\left\lfloor \frac{n-r}{(r-1)^2} \right\rfloor
\le ex_r^{\mathrm{lin}}(n, \{C^r_{1,r},C^*_{1,r}\}) \le
\frac{r(r-2)(n-s)}{r-1},$$
    where $s$ is the number of vertices with degree at least $(r-1)^2+2$.
\end{theorem}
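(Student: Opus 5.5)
For the lower bound I would use a blow-up of affine planes. Let $q=r-1$ and assume an affine plane $AG(2,q)$ of order $q$ exists (e.g. $q$ a prime power; otherwise the construction has to be replaced by a suitable transversal-design substitute). Such a plane has $(r-1)^2$ points and $r(r-1)$ lines of size $r-1$, split into $r$ parallel classes. Fix $r$ vertices $x_1,\dots,x_r$, and take $m=\lfloor (n-r)/(r-1)^2\rfloor$ vertex-disjoint copies of $AG(2,q)$. In each copy, extend every line of the $i$-th parallel class by $x_i$. The resulting $r$-graph has $r(r-1)m$ edges, and any remaining vertices are left isolated.

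Linearity of this construction is immediate. Two lines of one copy either meet in a point and get different $x_i$'s, or they are parallel and share only $x_i$; lines from different copies share at most one $x_i$.

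The key observation is the following. If $v$ is a point of a copy, every edge through $v$ is a line of that copy, and any two lines of different classes meet. Hence two edges hanging at distinct points of the same copy are never disjoint. Now let $e$ be any edge, lying in one copy with $r-1$ copy points and one point $x_i$. Then $e$ cannot serve as a base with even two disjoint pendant edges at two copy points. Both $C^r_{1,r}$ and $C^*_{1,r}$ require three pairwise disjoint pendant edges at distinct base vertices: in $C^*_{1,r}$ these are $e_1,e_{r-1},e_r$. At least two of those three base vertices are copy points, which is a contradiction. So the construction is $\{C^r_{1,r},C^*_{1,r}\}$-free.

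For the upper bound I would use discharging. Let $S$ be the set of vertices of degree at least $(r-1)^2+2$. Each edge starts with charge $1$ and distributes it to its vertices outside $S$. The aim is to show that every vertex $v\notin S$ ends with charge at most $\frac{r(r-2)}{r-1}$, which gives $|E|\le \frac{r(r-2)}{r-1}(n-s)$. The first step is a structural lemma about the $r$ vertices of an edge $e$. Since there is no $C^r_{1,r}$, the family of sets $\{E(u)\setminus\{e\}: u\in e\}$ has no system of distinct representatives consisting of pairwise disjoint edges. By linearity, an edge through $u$ meets at most $r-1$ edges through another vertex $u'$. A greedy argument then shows that if every vertex of $e$ had degree at least $(r-1)^2+2$, disjoint representatives could be chosen one vertex at a time. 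Indeed, at each step at most $(r-1)(\text{number already chosen})$ edges are blocked, and the count $(r-1)^2+1$ of other edges at $u$ suffices. So every edge contains a vertex outside $S$. Excluding $C^*_{1,r}$ is what handles the case where the blocking comes from many pendant edges sharing one vertex.

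Next I would refine this greedy argument into a quantitative charge rule. An edge sends charge to its low-degree vertices in proportion to how \emph{blocked} they are, for example $1/|e\setminus S|$ as a base rule, corrected according to the degrees. I would then bound the charge received by a vertex $v$ of degree $d\le (r-1)^2+1$ by a case analysis on $d$ and on how many of its edges are rich, that is, have few low vertices.

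The main obstacle I expect is exactly this local inequality: showing that the average over edges at $v$ of the charge sent to $v$ stays below $\frac{r(r-2)}{r-1}\cdot\frac1d$. That is where both forbidden configurations must enter, and where the affine-plane structure must appear as the extremal case with equality. I would first try the uniform rule, and then fix the bad cases by transferring charge along pendant edges identified via the Hall-type failure.
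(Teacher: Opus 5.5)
Your upper-bound part is not yet a proof. No charge rule is fixed, and the local inequality is explicitly left open. What you actually establish (every edge meets $V(H)\setminus S$) gives only $|E(H)|\le((r-1)^2+1)(n-s)$. The missing idea is already inside your greedy step: the degree thresholds can be graded. Order $e=\{u_1,\dots,u_r\}$ with $d(u_1)\ge\dots\ge d(u_r)$ and choose pendant edges starting from $u_r$. The $j$-th vertex processed has at most $(j-1)(r-1)$ blocked edges to avoid, so $d\ge (j-1)(r-1)+2$ suffices (this is Lemma~\ref{lem1}). Hence $C^r_{1,r}$-freeness yields an $i$ with $d(u_i)\le (r-i)(r-1)+1$. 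Then $u_i,\dots,u_r\notin S$ and $\sum_{v\in e\setminus S}1/d(v)\ge \frac{r-i+1}{(r-i)(r-1)+1}\ge \frac{r}{(r-1)^2+1}$. Since $\sum_{e}\sum_{v\in e\setminus S}1/d(v)\le n-s$, you get $|E(H)|\le \frac{((r-1)^2+1)(n-s)}{r}$. In your discharging language, let $e$ split its unit charge over $e\setminus S$ in proportion to $1/d(v)$. This is Theorem~\ref{main} with $k=r$. Because $\frac{r(r-2)}{r-1}-\frac{(r-1)^2+1}{r}=\frac{r^2-4r+2}{r(r-1)}>0$ for $r\ge 4$, it gives the stated upper bound for $r\ge 4$ from $C^r_{1,r}$-freeness alone. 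So two of your expectations are wrong for $r\ge4$: $C^*_{1,r}$ need not enter the local inequality, and the affine-plane construction is not an equality case, since it has only about $\frac{r}{r-1}n$ edges. Only at $r=3$ do the lower and upper bounds meet, and there the $1/d$ weighting gives just $\frac53(n-s)$. That case needs the finer argument of Tang, Wu, Zhang and Zheng (Theorem~\ref{thm2}), which nothing in your sketch supplies. The paper itself only quotes this theorem; its own contribution is the route just described.

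Your lower-bound construction is correct whenever an affine plane of order $r-1$ exists. Linearity checks out, and since two edges hanging at distinct copy points of a base always meet, no edge is the base of even a $C^r_{1,3}$, which both forbidden graphs contain. However, the ``transversal-design substitute'' you allude to does not exist. With each edge consisting of one $x_i$ and $r-1$ copy points, linearity and the count $r(r-1)$ force each class to be $r-1$ disjoint blocks partitioning the copy. They also force blocks of different classes to meet in exactly one point, so the structure is a net of order $r-1$ with $r$ parallel classes, i.e.\ an affine plane. In fact the hypothesis cannot be dropped at all. A linear $r$-graph on $r^2-r+1$ vertices with $r(r-1)$ edges leaves exactly $\binom r2$ pairs uncovered, and each vertex misses a multiple of $r-1$ of them. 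This forces the uncovered pairs to form a $K_r$, so the graph is a projective plane of order $r-1$ minus a line. Since there is no projective plane of order $6$, the displayed lower bound fails for $r=7$, $n=43$. You should state the lower bound under the existence assumption rather than promise a general substitute.
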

Note that for $r =3$, $C_{1,r}^r\cong C^*_{1,r}$. Thus, \Cref{thm3} is a generalization of \Cref{thm2}.

Recently, Adak and Verma~\cite{adak2026bounds} generalized the bounds for trees with four edges in linear $3$-graphs established in~\cite{gyarfas2022linear} to the setting of linear $r$-graphs. They established the following result.
\begin{theorem}[\cite{adak2026bounds}, Theorem 4]\label{thm4} For $r \geq 3$, $ex_r^{\mathrm{lin}}(n,C^r_{1,3}) \leq \dfrac{(2r-1)n}{r}$.
\end{theorem}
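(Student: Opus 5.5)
The plan is a weighting (discharging) argument. It rests on a greedy embedding lemma saying that in a $C^r_{1,3}$-free linear $r$-graph, no edge can contain three vertices whose degrees are simultaneously large in a suitable graded sense. Let $H$ be a $C^r_{1,3}$-free linear $r$-graph on $n$ vertices, and discard isolated vertices, which only helps. We have the identity
$$\sum_{e\in E(H)}\sum_{v\in e}\frac{1}{d(v)}=\sum_{v}\,d(v)\cdot\frac{1}{d(v)}=n'\le n,$$
where $n'$ is the number of non-isolated vertices. It therefore suffices to show that every edge $e$ satisfies $\sum_{v\in e} 1/d(v)\ge r/(2r-1)$, since then $|E(H)|\cdot r/(2r-1)\le n$.

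\textbf{Key lemma (greedy embedding).} Let $e$ be an edge, and let $u_1,u_2,u_3\in e$ be distinct with $d(u_1)\ge 2$, $d(u_2)\ge r+1$ and $d(u_3)\ge 2r$. Then $H$ contains a copy of $C^r_{1,3}$ with base $e$. To prove this, pick any edge $f_1\ni u_1$ with $f_1\neq e$; by linearity $f_1\cap e=\{u_1\}$. Next, an edge $f\ni u_2$ with $f\ne e$ can meet $f_1$ only at a vertex $w\in f_1\setminus\{u_1\}$, because the pair $u_1u_2$ already lies in $e$. Linearity allows at most one edge through each pair $u_2w$, so at most $r-1$ edges through $u_2$ are blocked. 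Since $d(u_2)-1\ge r$, some $f_2\ni u_2$ with $f_2\ne e$ is disjoint from $f_1$. By the same reasoning, at most $2(r-1)$ edges through $u_3$ other than $e$ meet $f_1\cup f_2$, and $d(u_3)-1\ge 2r-1>2r-2$, so a suitable $f_3$ exists. Each $f_i$ meets $e$ only in $u_i$, so $e,f_1,f_2,f_3$ form a $C^r_{1,3}$.

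\textbf{Case analysis per edge.} Order the degrees in $e$ as $d_1\ge d_2\ge\dots\ge d_r$. By the lemma, at least one of the following holds: $d_3=1$, or $d_2\le r$, or $d_1\le 2r-1$. We check each case.
\begin{itemize}
\item If $d_3=1$, then $r-2\ge 1$ vertices have weight $1$, so the sum is at least $1\ge r/(2r-1)$.
\item If $d_2\le r$, then the $r-1$ vertices other than the top one contribute at least $(r-1)/r$. This is at least $r/(2r-1)$ because $(r-1)(2r-1)-r^2=r^2-3r+1>0$ for $r\ge 3$.
\item If $d_1\le 2r-1$, all degrees are at most $2r-1$, and the sum is at least $r/(2r-1)$.
\end{itemize}
Summing over edges then gives $|E(H)|\le (2r-1)n/r$.

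The only delicate point is the blocking count in the lemma. One must use that $u_iu_j\in e$, so that the vertex $u_1$ of $f_1$ (and similarly $u_2$ of $f_2$) cannot serve as a meeting point. That is what yields the thresholds $r+1$ and $2r$ rather than $r+2$ and $2r+1$, which is exactly what makes the final case give $r/(2r-1)$.
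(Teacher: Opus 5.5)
Your proof is correct and is essentially the paper's argument specialized to $k=3$. It uses the same greedy embedding lemma with thresholds $2$, $r+1$, $2r$ (Lemma~\ref{lem1}), the same three-way split $d_3\le 1$, $d_2\le r$, $d_1\le 2r-1$ on the sorted degrees (Lemma~\ref{lem2}), and the same double count of $\sum_{e}\sum_{v\in e}1/d(v)$; the only difference is that the paper weights only vertices of degree at most $2r-1$, which gives $(2r-1)(n-s)/r$ before dropping $s$.
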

\section{Main Result}
Our result provides an upper bound on $ex_r^{\mathrm{lin}}(n, C^r_{1,k})$ for $3 \le k \le r$. In particular, when $k = r$ we obtain an improvement over the upper bound in \Cref{thm3} for all $r \geq 4$, while setting $k = 3$ recovers the bound of \Cref{thm4}. 
The cases $k\in \{1,2\}$ fall outside the genuine crown regime and will be discussed separately.
\begin{theorem}\label{main}
    For $3\leq k \leq r$,
    $$ex_r^{\mathrm{lin}}(n,C^r_{1,k}) \leq \frac{\big((k-1)(r-1)+1\big)(n-s)}{r},$$
    where $s$ is the number of vertices with degree at least $(k-1)(r-1) +2$. 
\end{theorem}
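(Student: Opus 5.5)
The plan is a double-counting (weighting) argument on the low-degree vertices, driven by one structural lemma that says what the degree sequence of an edge must look like in a $C^r_{1,k}$-free linear $r$-graph. Throughout, let $H$ be a $C^r_{1,k}$-free linear $r$-graph on $n$ vertices with $m$ edges. Set $D=(k-1)(r-1)+1$, and let $S$ be the set of vertices of degree at least $D+1$, so $|S|=s$.

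\emph{Step 1 (greedy lemma).} Let $e$ be an edge and $v_1,\dots,v_k\in e$ distinct vertices with $d(v_i)\ge (i-1)(r-1)+2$. We claim $H$ then contains $C^r_{1,k}$ with base $e$. We choose edges $f_1,\dots,f_k$ greedily, where $f_i\ni v_i$, $f_i\neq e$, and $f_i$ is disjoint from $f_1,\dots,f_{i-1}$.

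Fix $i$ and an earlier $f_j$ with $j<i$. By linearity $f_j\cap e=\{v_j\}$, and the pair $v_iv_j$ already lies in $e$. Hence every edge through $v_i$ other than $e$ that meets $f_j$ does so in a vertex of $f_j\setminus\{v_j\}$. Linearity allows at most one such edge per vertex, so at most $r-1$ edges through $v_i$ are blocked by $f_j$. In total at most $(i-1)(r-1)+1$ edges through $v_i$ are forbidden, counting $e$ itself. So the choice of $f_i$ is always possible.

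The resulting $f_i$ are pairwise disjoint, and each meets $e$ only in $v_i$. Together with $e$ they form $C^r_{1,k}$.

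\emph{Step 2 (consequence for each edge).} Order the vertices of an edge $e$ so that $d_1\ge d_2\ge\dots\ge d_r$. Apply Step 1 to the $k$ vertices of largest degree, processed in increasing order of degree. Since $H$ is crown-free, there must be an index $1\le j\le k$ with $d_j\le (k-j)(r-1)+1\le D$.

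It follows that the $r-j+1$ vertices of $e$ in positions $j,\dots,r$ all have degree at most $(k-j)(r-1)+1$. In particular they all lie outside $S$.

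\emph{Step 3 (weighting).} Each vertex $v\notin S$ of positive degree sends weight $1/d(v)$ to each edge containing it. The total weight sent is at most $|V\setminus S|=n-s$. By Step 2, every edge $e$ receives at least
\[
w(e)\ \ge\ \frac{r-j+1}{(k-j)(r-1)+1}\quad\text{for some } 1\le j\le k .
\]
It therefore suffices to show that this quantity is at least $r/D$ for every $1\le j\le k$, since then $m\cdot r/D\le n-s$, which is the bound in \Cref{main}.

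Cross-multiplying, consider
\[
f(j)=(r-j+1)D-r\big((k-j)(r-1)+1\big).
\]
This is linear in $j$, so it is enough to check the endpoints. At $j=1$ we get $f(1)=rD-rD=0$. At $j=k$ we get $f(k)=(r-k+1)D-r$, which is nonnegative because $r-k+1\ge 1$ and $D\ge r$ when $k\ge 2$. Hence $f(j)\ge 0$ on $[1,k]$, and the theorem follows.

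\emph{Main obstacle.} The delicate point is Step 1's count of blocked edges: one has to use that $f_j$ meets $e$ only at $v_j$, so it costs only $r-1$ edges rather than $r$. The thresholds $(i-1)(r-1)+2$ are exactly what make the linear function in Step 3 vanish at $j=1$, so that the final bound comes out as $D(n-s)/r$. I expect Step 3 to need only the endpoint check once the ordering in Step 2 is set up correctly.
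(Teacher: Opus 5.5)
Your proposal is correct and follows the paper's proof essentially step for step: the same greedy lemma for building the crown on a base edge, the same consequence that some $u_j$ with $j\le k$ has $d(u_j)\le (k-j)(r-1)+1$, and the same double count of $I(v)/d(v)$ over vertices of degree at most $(k-1)(r-1)+1$. The only cosmetic difference is in the final inequality: the paper factors the numerator explicitly as $(j-1)\bigl(r(r-k)+k-2\bigr)$, whereas you note that it is linear in $j$ and check the endpoints $j=1$ and $j=k$, which amounts to the same verification.
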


\begin{remark}
Taking $k=r$ in \Cref{main}, we obtain
$$ex_r^{\mathrm{lin}}(n,C_{1,r}^r)
\le \frac{((r-1)^2+1)(n-s)}{r},$$
where $s$ denotes the number of vertices of degree at least $(r-1)^2+2$. Thus, the threshold defining $s$ agrees with that in \Cref{thm3}. Now,
$$
\frac{r(r-2)(n-s)}{r-1}-
\frac{((r-1)^2+1)(n-s)}{r}=
\frac{(n-s)(r^2-4r+2)}{r(r-1)}.
$$
Since $r^2-4r+2>0$ for all $r\ge 4$, it follows that
$$
\frac{((r-1)^2+1)(n-s)}{r}<
\frac{r(r-2)(n-s)}{r-1}\qquad \text{for all } r\ge 4.
$$
Moreover, every $\{C_{1,r}^r,C_{1,r}^*\}$-free linear $r$-graph is, in particular, $C_{1,r}^r$-free. Hence for $r \geq 4$,
$$
ex_r^{\mathrm{lin}}(n,\{C_{1,r}^r,C_{1,r}^*\})
\le
ex_r^{\mathrm{lin}}(n,C_{1,r}^r)
\le
\frac{((r-1)^2+1)(n-s)}{r}
<
\frac{r(r-2)(n-s)}{r-1}.
$$
Therefore, for $r\ge 4$, \Cref{main} yields a better upper bound than \Cref{thm3}.

Note that for $r=3$, the bound in \Cref{thm2} remains stronger.
\end{remark}
The improvement is noteworthy for two reasons. First, it yields a sharper coefficient even though Zhang, Broersma, and Wang work under the stronger assumption of forbidding both $C_{1,r}^r$ and the auxiliary configuration $C_{1,r}^*$. Second, our proof uses only the crown-free assumption and a direct degree-summation argument, and therefore avoids the additional structural analysis required by auxiliary forbidden configurations.
\begin{remark}
    Taking $k =3$, we get
    $$ex_r^{\mathrm{lin}}(n,C_{1,3}^r) \leq \frac{(2(r-1)+1)(n-s)}{r} \leq \frac{(2r-1)n}{r},$$
    where $s$ is the number of vertices with degree at least $6$ and thus trivially $s \geq 0$. Therefore, we recover the bound from \Cref{thm4}.
\end{remark}
\section{Proof of \Cref{main}}
Let $H$ be a linear $C^r_{1,k}$-free $r$-graph on $n$ vertices. We can assume $H$ does not contain any isolated vertices since deleting isolated vertices does not change the edge set.
\begin{lemma}\label{lem1}
    Let $G$ be a linear $r$-graph. If there exists $e \in E(G)$, with distinct vertices $v_1,v_2,\dots,v_k \in V(e)$, such that $d(v_i) \geq (i-1)(r-1)+2$ for all $i \in [k]$, then $G$ contains a copy of $C^r_{1,k}$ with $e$ as its base.
\end{lemma}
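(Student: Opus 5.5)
We build the pendant edges one at a time, in the order $v_1,v_2,\dots,v_k$, using a greedy argument. For each $i$ we will choose an edge $f_i\ni v_i$ with $f_i\neq e$ such that $f_i$ is disjoint from $f_1,\dots,f_{i-1}$ and meets $e$ only in $v_i$. Together with $e$, the edges $f_1,\dots,f_k$ then form a copy of $C^r_{1,k}$ with base $e$. The condition that $f_i$ meets $e$ only in $v_i$ is automatic: by linearity, any edge through $v_i$ other than $e$ shares exactly the vertex $v_i$ with $e$.

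Suppose $f_1,\dots,f_{i-1}$ have already been chosen and are pairwise disjoint. We count the edges through $v_i$ that are unusable for $f_i$.
\begin{enumerate}
\item The edge $e$ itself is unusable, which costs one edge.
\item Now take an edge $g\ni v_i$ with $g\neq e$ that meets some $f_j$ with $j<i$. The vertex $v_i$ is not in $f_j$, since $f_j\cap e=\{v_j\}$ and $v_i\neq v_j$. Hence $g$ meets $f_j$ in a vertex $u\in f_j\setminus\{v_i\}$.
\item By linearity, for each such $u$ at most one edge contains both $v_i$ and $u$.
\item Moreover, $u\neq v_j$: if $u=v_j$, then $g$ would contain both $v_i$ and $v_j$, and linearity would force $g=e$. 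So $u$ ranges over $f_j\setminus\{v_j\}$, a set of $r-1$ vertices.
\end{enumerate}
Consequently, each earlier $f_j$ blocks at most $r-1$ edges through $v_i$. The total number of blocked edges is therefore at most $1+(i-1)(r-1)$.

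Since $d(v_i)\ge (i-1)(r-1)+2$, at least one edge through $v_i$ remains available, and we take it as $f_i$. This new edge differs from $e$ and is disjoint from all earlier $f_j$, so the construction can continue. After step $k$ we have the desired crown.

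The only delicate point is the counting in step 2: checking that $v_j$ itself does not need to be counted, which is what gives the bound $r-1$ per edge rather than $r$. This is handled by the linearity argument in step 4, which shows that $e$ is the only edge containing both $v_i$ and $v_j$.
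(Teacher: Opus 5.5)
Your proof is correct and takes essentially the same approach as the paper: a greedy choice of pendant edges in the order $v_1,\dots,v_k$, with linearity showing that each earlier edge $f_j$ blocks at most $r-1$ edges through $v_i$ other than $e$. Your step 4, which explains why $v_j$ itself never needs to be counted, spells out a point the paper leaves implicit when it says each $e_j$ has $r-1$ vertices outside $e$.
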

\begin{proof}
We choose the $k$ edges of $C_{1,k}^r$ other than the base edge $e$ greedily, selecting one edge through each of the vertices $v_1,v_2,\dots,v_k$. Since $d(v_1)\ge 2$, there exists an edge $e_1 \in E(G)$, distinct from $e$, such that $v_1 \in e_1$.

Now suppose inductively that for some $i \ge 2$, we have already chosen pairwise disjoint edges $e_1,e_2,\dots,e_{i-1}$ such that $e_j$ intersects $e$ exactly at $v_j$ for each $j\in [i-1]$.

Observe that the set $\bigcup_{j=1}^{i-1}(V(e_j)\setminus\{v_j\})$ contains $(r-1)(i-1)$ vertices. Consider the edges incident on $v_i$. There are at least $(r-1)(i-1)+1$ edges through $v_i$ distinct from $e$. By linearity, none of these edges intersects $e$ except at $v_i$. Moreover, each $e_j$, $j \in [i-1]$, contains $r-1$ vertices outside $e$, and hence by linearity at most $r-1$ edges through $v_i$ (other than $e$) can intersect any fixed $e_j$.

Therefore, at most $(r-1)(i-1)$ edges through $v_i$ intersect one of the previously chosen edges $e_1,\dots,e_{i-1}$. Consequently, there exists at least
$$
(d(v_i)-1)-(r-1)(i-1)\ge 1
$$
edge through $v_i$ distinct from $e$ that is disjoint from $e_1,\dots,e_{i-1}$. Choose such an edge and denote it by $e_i$.

Proceeding inductively, we obtain edges $e_1,\dots,e_k$ such that $v_i\in e_i$ for every $i\in[k]$, the edges $e_1,\dots,e_k$ are pairwise disjoint, and each $e_i$ intersects $e$ exactly in $v_i$. Hence $e,e_1,\dots,e_k$ form a copy of $C_{1,k}^r$ with base edge $e$.
\end{proof}

\begin{lemma}\label{lem2}
    Let $e = \{u_1,u_2,\dots u_r\}$ be any arbitrary edge in $H$ with $d(u_1) \geq d(u_2) \geq\dots \geq d(u_r)$. Then there exists $i \in [k]$ such that $d(u_i) \leq (k-i)(r-1) +1$.
\end{lemma}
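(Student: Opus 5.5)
The plan is to argue by contradiction and reduce to \Cref{lem1}, after reversing the order of the $k$ highest-degree vertices of $e$. Suppose, to the contrary, that $d(u_i) \geq (k-i)(r-1)+2$ for every $i \in [k]$. Since $k \le r$, the vertices $u_1,\dots,u_k$ are $k$ distinct vertices of $e$. Relabel them in reverse order by setting $v_j = u_{k+1-j}$ for $j \in [k]$. Then
$$
d(v_j) = d(u_{k+1-j}) \geq \big(k-(k+1-j)\big)(r-1)+2 = (j-1)(r-1)+2
$$
for every $j\in[k]$.

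This is exactly the hypothesis of \Cref{lem1}, applied with $G=H$ and the edge $e$. Hence $H$ contains a copy of $C^r_{1,k}$ with base $e$. This contradicts the assumption that $H$ is $C^r_{1,k}$-free, so some $i\in[k]$ must satisfy $d(u_i)\le (k-i)(r-1)+1$.

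The argument has no real obstacle; the only point needing care is the index bookkeeping. \Cref{lem1} asks the \emph{smallest} degree requirement ($2$) of the first vertex and the largest requirement of the last. The ordering $d(u_1)\ge \dots\ge d(u_r)$ puts the largest degrees first, so one must pair $u_1$ with the largest threshold $(k-1)(r-1)+2$ and $u_k$ with the threshold $2$, which is what the reversal does.

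The monotone ordering is not used in the contradiction itself. It matters only for how the lemma will be used later: the first failing index $i$ forces $d(u_j)\le (k-i)(r-1)+1$ for all $j\ge i$, which should drive the degree-summation bound in \Cref{main}.
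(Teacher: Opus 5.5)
Your proof is correct and is essentially the paper's argument. Like the paper, you assume every threshold $d(u_i)\ge (k-i)(r-1)+2$ holds, reverse-index via $v_j=u_{k+1-j}$ to meet the hypothesis of Lemma~\ref{lem1}, and conclude that $e$ is the base of a copy of $C^r_{1,k}$, contradicting that $H$ is $C^r_{1,k}$-free.
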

\begin{proof}
    Suppose on the contrary, $d(u_i) \geq (k-i)(r-1) +2$ for all $ i \in [k]$. Set $v_i = u_{k-i+1}$ for all $i \in [k]$. Thus we get,
    $$d(v_i) = d(u_{k-i+1}) \geq (i-1)(r-1) +2 \qquad \text{for all $i \in [k]$}$$
    Hence, by Lemma~\ref{lem1}, the edge $e$ is the base of a copy of
$C_{1,k}^r$, contradicting our assumption that $H$ is $C_{1,k}^r$-free.
This proves the lemma.
\end{proof}
Let $e \in E(H)$ with $e = \{u_1,u_2,\dots,u_r\}$ and $d(u_1) \geq d(u_2)\geq \dots \geq d(u_r)$. From Lemma~\ref{lem2}, let $i \in [k]$ be such that $d(u_i) \leq (k-i)(r-1) +1$. 

Since the degrees are non-increasing, we have $d(u_j) \leq (k-i)(r-1) +1$ for all $j \in \{i, i+1,\dots, r\}$. In particular, the $r-i+1$ vertices $u_i,u_{i+1},\dots,u_{r}$ have degree at most $(k-i)(r-1)+1$.

Recall that $s$ is the number of vertices with degree at least $(k-1)(r-1)+2$. Now define the indicator function $I: V(H) \rightarrow \{0,1\}$. Such that,
$$I(v)=
\begin{cases}
1, & \text{if $d(v) \leq (k-1)(r-1)+1$},\\
0,& \text{otherwise}
\end{cases}$$
Therefore,  we have \begin{equation}\label{eqn1}
    \sum_{v \in V(H)} I(v) = n-s.
\end{equation}

Since $(k-1)(r-1)+1 \geq (k-i)(r-1)+1$ for all $ i\in [k]$, for the edge $e\in E(H)$ and the choice of $i \in [k]$ we get,
\begin{equation}\label{eqn2}
    \sum_{v \in e} \frac{I(v)}{d(v)} \geq \sum_{j=i}^r\frac{1}{d(u_j)} \geq \frac{r-i+1}{(k-i)(r-1)+1}
\end{equation}
Note that, 
\begin{equation*}
    \frac{r-i+1}{(k-i)(r-1)+1} - \frac{r}{(k-1)(r-1)+1} = \frac{(i-1)\big(r(r-k)+k-2\big)}{\big((k-i)(r-1)+1\big)\big((k-1)(r-1)+1\big)}
\end{equation*}
Since $3\leq k \leq r$, we have $(r(r-k)+k-2) > 0$. Also, $i \geq 1$. Therefore,
\begin{equation}\label{eqn3}
    \frac{r-i+1}{(k-i)(r-1)+1} \geq \frac{r}{(k-1)(r-1)+1}
\end{equation}
Therefore, from \Cref{eqn2,eqn3} we get that, for all $e \in E(H)$,
\begin{equation*}
    \sum_{v \in e}\frac{I(v)}{d(v)} \geq \frac{r}{(k-1)(r-1)+1}
\end{equation*}
Thus summing over all the edges gives that,
\begin{equation}\label{eqn4}
    \sum_{e \in E(H)}\sum_{v \in e}\frac{I(v)}{d(v)} \geq |E(H)|\frac{r}{(k-1)(r-1)+1}
\end{equation}
Reversing the order of summation and from \Cref{eqn1} we get,
\begin{equation}\label{eqn5}
     \sum_{e \in E(H)}\sum_{v \in e}\frac{I(v)}{d(v)} =\sum_{v \in V(H)} \sum_{e \ni v}\frac{I(v)}{d(v)} = \sum_{v \in V(H)} d(v) \frac{I(v)}{d(v)} =  \sum_{v \in V(H)}I(v) = n-s
\end{equation}
Thus, from \Cref{eqn4,eqn5} we get,
\begin{align}
    &n-s \geq |E(H)|\frac{r}{(k-1)(r-1)+1}\nonumber\\
    \implies & |E(H)| \leq \frac{\big((k-1)(r-1)+1\big)(n-s)}{r}\label{eqn6}
\end{align}
Since the bound in \Cref{eqn6} is true for any arbitrary linear $C_{1,k}^r$-free $r$-graph $H$ on $n$ vertices, we get that
$$ex_r^{\mathrm{lin}}(n,C^r_{1,k}) \leq \frac{\big((k-1)(r-1)+1\big)(n-s)}{r},$$
    where $s$ is the number of vertices with degree at least $(k-1)(r-1) +2$. 

    This completes the proof. \qed
\vspace{2mm}

We next prove a weighted refinement of \Cref{main}.

\begin{definition}
Let $H$ be a hypergraph and let $e\in E(H)$. We define $k(e)$ to be the maximum integer $t$ such that $e$ is the base edge of a copy of $C_{1,t}^r$ in $H$.
\end{definition}

\begin{proposition}\label{weighted}
Let $H$ be a linear $C_{1,k}^r$-free $r$-graph on $n$ vertices, and let $s$ denote the number of vertices of degree at least $(k-1)(r-1)+2$. Then
\[
\sum_{e\in E(H)} \frac{1}{k(e)(r-1)+1} \le \frac{n-s}{r}.
\]
\end{proposition}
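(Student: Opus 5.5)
The plan is to rerun the degree-summation argument from the proof of \Cref{main}, but with the global parameter $k$ replaced edge-by-edge by $t=k(e)+1$. As before, I would delete isolated vertices and use the indicator $I$ of vertices with $d(v)\le (k-1)(r-1)+1$, so that $\sum_{v}I(v)=n-s$ by \Cref{eqn1}. The target is the per-edge inequality
\[
\sum_{v\in e}\frac{I(v)}{d(v)}\ \ge\ \frac{r}{k(e)(r-1)+1}\qquad\text{for every } e\in E(H).
\]
Summing this over all edges and reversing the order of summation exactly as in \Cref{eqn5} gives a left-hand side of $\sum_v I(v)=n-s$. Dividing by $r$ then yields the proposition.

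To prove the per-edge inequality, fix $e=\{u_1,\dots,u_r\}$ with $d(u_1)\ge\dots\ge d(u_r)$, and let $t=k(e)$.
\begin{itemize}
\item Since $H$ is $C_{1,k}^r$-free, $t\le k-1\le r-1$, so $t+1\le r$ and the vertex $u_{t+1}$ exists.
\item By maximality of $k(e)$, the edge $e$ is not the base of any copy of $C_{1,t+1}^r$.
\item Lemma~\ref{lem1} holds for an arbitrary parameter in place of $k$. So the argument of Lemma~\ref{lem2}, run with $t+1$ in place of $k$, yields some $i\in[t+1]$ with $d(u_i)\le (t+1-i)(r-1)+1$.
\end{itemize}
By monotonicity, all of $u_i,\dots,u_r$ then have degree at most $(t+1-i)(r-1)+1\le (k-1)(r-1)+1$, so each of them has $I=1$. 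Hence
\[
\sum_{v\in e}\frac{I(v)}{d(v)}\ \ge\ \frac{r-i+1}{(t+1-i)(r-1)+1}.
\]

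It remains to compare this with $r/(t(r-1)+1)$. I would use the identity displayed before \Cref{eqn3}, with $k$ replaced by $t+1$. The difference then has numerator $(i-1)\bigl(r(r-t-1)+t-1\bigr)$ and a positive denominator. For $t\ge1$ and $t+1\le r$ this numerator is nonnegative. For $t=0$ we are forced to take $i=1$, and the two quantities coincide. This case is consistent: $k(e)=0$ means every vertex of $e$ has degree $1$, so the sum equals $r$, matching $r/1$.

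The main obstacle is this final comparison in the degenerate parameter range. In \Cref{main} positivity used $k\ge3$, whereas here $t+1$ can be $1$ or $2$. So I would check directly that $r(r-t-1)+t-1\ge0$ whenever $1\le t\le r-1$. This holds: if $t\le r-2$ the first term is positive, and if $t=r-1$ the expression is $t-1\ge 0$. Once this is done, the rest is routine.
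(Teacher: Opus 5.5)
Your proposal is correct and follows essentially the same route as the paper: it runs the argument of Lemma~\ref{lem2} with $k(e)+1$ in place of $k$, compares the per-edge weight using the same algebraic identity, and double counts. Indexing by $t=k(e)$ instead of the paper's $t=k(e)+1$ is only cosmetic, and you also check explicitly what the paper covers with ``exactly as in the proof of \Cref{main}'': that $u_{t+1}$ exists, that $r(r-t-1)+t-1\ge 0$ for $1\le t\le r-1$, and that the case $t=0$ works.
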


\begin{proof}
We use the same notation and the same double-counting argument as in the proof of \Cref{main}. Let $I$ be the indicator function defined there. Then
$\sum_{v\in V(H)} I(v)=n-s$.

Fix an edge $e=\{u_1,\dots,u_r\}\in E(H)$ with $d(u_1)\ge d(u_2)\ge \cdots \ge d(u_r)$,
and set $t:=k(e)+1$. Since $e$ is not the base edge of any copy of $C_{1,t}^r$, the argument of Lemma \ref{lem2} yields an index $i\in [t]$ such that
\[
d(u_i)\le (t-i)(r-1)+1.
\]
Hence, exactly as in the proof of \Cref{main},
\[
\sum_{v\in e}\frac{I(v)}{d(v)}
\ge \frac{r-i+1}{(t-i)(r-1)+1}
\ge \frac{r}{(t-1)(r-1)+1}
= \frac{r}{k(e)(r-1)+1}.
\]
Summing over all edges and reversing the order of summation, we obtain
\[
r\sum_{e\in E(H)}\frac{1}{k(e)(r-1)+1}
\le
\sum_{e\in E(H)}\sum_{v\in e}\frac{I(v)}{d(v)}
=
\sum_{v\in V(H)} I(v)
=
n-s.
\]
This proves the proposition.
\end{proof}

Since $k(e)\le k-1$ for every edge $e\in E(H)$, we have
$k(e)(r-1)+1\le (k-1)(r-1)+1$,
and hence \Cref{main} follows immediately from Proposition \ref{weighted}. Thus, Proposition \ref{weighted} may be viewed as a weighted refinement of \Cref{main}.

This type of local weighted refinement is often referred to as \emph{localization}, and such refinements have been studied extensively in recent years in connection with classical extremal bounds. For related weighted refinements results, see \cite{adak2025vertex,adak2026localization,malec2023localized,zhao2025localized}.

\section{Extending \Cref{main} to smaller values of $k$}
We separate this section from the rest of the analysis since for $k\in \{1,2\}$, $C^r_{1,k}$ falls outside the genuine crown regime.
Indeed, in our definition of $C_{1,k}^r$, we work with $3 \le k \le r$, so that
$C_{1,k}^r$ is a true crown-type configuration consisting of a base edge together with
$k$ pairwise disjoint edges meeting the base in distinct vertices. By contrast, $C^r_{1,1} \cong P^r_2$ and $C^r_{1,2} \cong P^r_3$, where $P^r_{i}$ is a linear $r$-path on $i$ edges.

Indeed, throughout the proof of \Cref{main}, the assumption $k\ge 3$ is used only
to verify that $r(r-k)+k-2>0$.
Since $r\ge 3$, this inequality also holds for $k\in\{1,2\}$. Therefore the same
argument yields the bound of \Cref{main} for $k=1$ and $k=2$.
\begin{remark}
    Taking $k =1$, we get
    $ex_r^{\mathrm{lin}}(n,P^r_2) \leq \frac{n-s}{r}$, where $s$ is the number of vertices with degree at least $2$. But we cannot have any vertex with degree more than $1$, otherwise we will get a $P^r_2$, therefore $s=0$. Thus we get,
    $$ex_r^{\mathrm{lin}}(n,P^r_2) \leq \left\lfloor\frac{n}{r}\right\rfloor$$ This bound is tight, as it is attained by a maximum matching.
\end{remark}

\begin{remark}
Taking $k=2$ in the bound of \Cref{main}, we obtain
$$ex_r^{\mathrm{lin}}(n,P_3^r)\le n-s,$$
where $s$ is the number of vertices of degree at least $r+1$.
On the other hand, \cite{zhang2025linear} proved that $ex_r^{\mathrm{lin}}(n,P_3^r)\le n$,
 and characterized the extremal graph class. In particular, the extremal graph is $r$-regular, and hence
$s=0$ in the equality case. Thus our bound may be viewed as a refinement in terms of $s$.
Interestingly, it is sharp only when $s=0$.

\medskip
\noindent\textbf{Claim.}
If $s>0$, then the bound $ex_r^{\mathrm{lin}}(n,P_3^r)\le n-s$ is not tight.

\begin{proof}
Suppose, for a contradiction, that $H$ is a linear $P_3^r$-free $r$-graph on $n$ vertices with
$|E(H)|=n-s,$ where $s>0$ is the number of vertices of degree at least $r+1$.

Let $v \in V(H)$ such that $d(v) \geq r+1$. Let $e_1,\dots,e_{d(v)}$ be the edges containing $v$. Without loss of generality, suppose that some vertex
$x\in e_1\setminus\{v\}$ lies in an edge $f$ not containing $v$. By linearity,
the edge $f$ can meet at most $r-1$ of the edges $e_2,\dots,e_{d(v)}$. Since
$d(v)\ge r+1$, there exists some $j\ge 2$ such that $f\cap e_j=\varnothing$.
Then the three edges $f,e_1,e_j$ form a copy of $P_3^r$. Therefore, the component containing the vertex $v$ must be a star centered at $v$, otherwise we will get a $P_3^r$. 

Therefore, each of the $s$ vertices of degree at least $r+1$ is the center of a distinct star
component. Let these components be $S_1,\dots,S_s$, and let $v_i$ be the center of $S_i$.
Set $H' = H \setminus \bigcup_{i=1}^s S_i$.
Since every vertex of $H'$ has degree at most $r$, we have $$r|E(H')|=\sum_{u\in V(H')} d(u)\le r|V(H')| \implies |E(H')|\le |V(H')|$$
Since, $S_i$ is a star and $d(v_i) \geq r+1$ we have,
$$|V(S_i)|-|E(S_i)| = 1+d(v_i)(r-1) - d(v_i) \ge 1+(r+1)(r-2)$$
Consequently,
$$
\begin{aligned}
s
= n-|E(H)|
&= \bigl(|V(H')|-|E(H')|\bigr)+\sum_{i=1}^s \bigl(|V(S_i)|-|E(S_i)|\bigr) \\
&\ge 0+s\bigl(1+(r+1)(r-2)\bigr)
> s,
\end{aligned}
$$
a contradiction.
\end{proof}
\end{remark}

\section*{Concluding Remarks}

We introduced the notion of a $k$-crown $C_{1,k}^r$ in linear $r$-graphs and established the upper bound
\[
ex_r^{\mathrm{lin}}(n,C_{1,k}^r)\le \frac{((k-1)(r-1)+1)(n-s)}{r},
\]
where $s$ denotes the number of vertices of degree at least $(k-1)(r-1)+2$. 
Although our main focus is on the genuine crown regime $3\le k\le r$, we also observed that the same argument extends formally to the degenerate cases $k\in \{1,2\}$, which correspond to the short linear paths $P_2^r$ and $P_3^r$, and therefore were treated separately.

In particular, when $k=r$, our bound improves the upper bound of Zhang, Broersma, and Wang~\cite{zhang2025generalized} for all $r\ge 4$, while for $k=3$ it recovers the bound for four-edge trees from \cite{adak2026bounds}.

An interesting direction for future work is to determine whether the bound in \Cref{main} is asymptotically sharp for fixed $r$ and $k$. In particular, it would be interesting to obtain matching lower bounds for $C_{1,k}^r$-free linear $r$-graphs, and to understand whether sharper extremal estimates can be proved by forbidding only $C_{1,k}^r$, without introducing auxiliary crown-type configurations.
\bibliographystyle{plain}
\bibliography{references}
\end{document}